\newcommand{\arxiv}[1]{\href{https://arxiv.org/abs/#1}{arXiv:\,#1}}
\newtheorem{theorem}{Theorem}[section]
\newtheorem{definition}[theorem]{Definition}
\newtheorem{conjecture}[theorem]{Conjecture}
\newcommand{\eps}{\varepsilon}
\newcommand{\R}{\mathbb{R}}
\newcommand{\Z}{\mathbb{Z}}
\newcommand{\N}{\mathbb{N}}
\DeclareMathAccent{\widehat}{\mathord}{largesymbols}{"62}
\title{$2$-large sets are sets of Bohr recurrence}
\author{
Ryan Alweiss
\thanks{Department of Pure Mathematics and Mathematical Statistics and Trinity College, University of Cambridge.
Email: {\tt ra699@cam.ac.uk}.
Research supported by an NSF Mathematical Sciences Postdoctoral Fellowship.}\\
}
\begin{document}
\maketitle

\begin{abstract}
	Let $\alpha_1, \cdots, \alpha_d$ be real numbers, and let $S$ be the set of integers $s$ so that $||\alpha_i s||_{\R/\Z}>\delta$ for some $i$ and some fixed $\delta>0$.  We prove $S$ is not \enquote{$2$-large}, i.e. there is a $2$-coloring of $\N$ that avoids arbitrarily long arithmetic progressions with common differences in $S$.
\end{abstract}

\section{Introduction}

One of the oldest and most classic results in Ramsey theorem is van der Waerden's theorem, that any finite coloring of $\N$ has arbitrarily long arithmetic progressions in some color \cite{vdw}.

A natural question, with deep connections to dynamics, is what happens if the common differences are restricted to be in some set $S$. In the 1990s, Brown, Graham, and Landman \cite{bgl} defined a set $S$ to be \enquote{large} if van der Waerden holds with restricted differences in the set $S$.

\begin{definition}[Large Sets \cite{bgl}]

We say that $S \subset \N$ is \enquote{large} if whenever $\N$ is finitely colored, there are arbitrarily long arithmetic progressions with common differences in $S$.
	
\end{definition}

A large set is also referred to as a \emph{set of multiple recurrence} or a \emph{set of topological multiple recurrence} in the dynamics literature.  See \cite{alweiss} for discussion about the equivalence.

Brown, Graham, and Landman also defined \enquote{2-large}, an analogous notion for $2$-colorings.

\begin{definition}[2-Large Sets \cite{bgl}]

We say that $S \subset \N$ is \enquote{2-large} if whenever $\N$ is two-colored, there are arbitrarily long arithmetic progressions with common differences in $S$.
	
\end{definition}

It is trivial to see that large sets are $2$-large sets.  Brown, Graham, and Landman conjectured that the reverse is also true.  This is referred to as the \enquote{Large Sets Conjecture} in the literature.

\begin{conjecture}[Large Sets Conjecture \cite{bgl}] If $S$ is $2$-large, then $S$ is large.
\end{conjecture}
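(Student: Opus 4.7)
The Large Sets Conjecture is a central open problem, so any proposal is necessarily aspirational. I would attack it in two stages: first establish that 2-large sets satisfy Bohr recurrence --- the statement in the abstract of this paper --- and then try to upgrade Bohr recurrence to full largeness.

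For the first stage, I would proceed by contrapositive. Given $\alpha_1,\ldots,\alpha_d$ and $\delta>0$, I want to exhibit a 2-coloring of $\N$ admitting no long AP with common difference in $S = \{s : \|\alpha_i s\|_{\R/\Z} > \delta \text{ for some } i\}$. The natural move is to pull back a 2-coloring from the torus $(\R/\Z)^d$ via the orbit map $n \mapsto (\alpha_1 n,\ldots,\alpha_d n)$. A single-coordinate stripe coloring fails, because one can engineer APs whose common difference is small in that chosen coordinate. The trick has to exploit the \emph{disjunctive} nature of $S$: whenever $s \in S$, at least one coordinate sees a rotation of size $>\delta$. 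I would therefore try a hierarchical coloring --- for instance, partition according to the parity of the smallest index $i$ for which $\|\alpha_i n\|_{\R/\Z}$ lies in a prescribed fine subinterval --- and check that this forces a color change along any $k$-AP with common difference in $S$, provided the partition is taken fine enough in terms of $k$ and $\delta$. The main technical content would be a pigeonhole / transversal argument on $(\R/\Z)^d$ showing that a monochromatic $k$-AP in the pullback must have a common difference that is small in \emph{every} coordinate, contradicting membership in $S$.

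For the second stage, one must upgrade Bohr recurrence to the multiple recurrence property implicit in largeness. This is where the main obstacle lies: Bohr recurrence corresponds morally to AP length $k=2$, and it is classical that Bohr recurrence does not imply even double recurrence for longer APs. Consequently, no purely abelian / harmonic-analytic argument can close the gap, and one would need to invoke higher-order structure --- Host--Kra characteristic factors, nilsequences, or polynomial-Bohr extensions --- while making genuine use of the 2-color hypothesis rather than just its Bohr-recurrence consequence. Since 2-largeness speaks directly only about 2-colorings while the conclusion demands arbitrary finite colorings, no standard bootstrapping from $2$ colors to many colors is currently available; overcoming this step, perhaps via an iterative color-splitting scheme that feeds the Bohr-recurrence theorem back into itself, is precisely why the conjecture remains open.
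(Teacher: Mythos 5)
The statement you were asked to prove is a \emph{conjecture}, not a theorem: the paper does not prove it, and you correctly recognize this. What the paper actually proves is precisely your ``stage 1'' (2-large implies Bohr recurrence, \Cref{thm:clean}); your ``stage 2'' correctly identifies the remaining gap that keeps the Large Sets Conjecture open. So your framing is honest and structurally right. But your sketch of stage 1 contains a genuine flaw, and it is instructive because the paper goes out of its way to flag exactly this pitfall.

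You propose to color the torus $(\R/\Z)^d$ directly and pull back via $n \mapsto (\alpha_1 n, \ldots, \alpha_d n)$, with the coloring refined by ``the parity of the smallest index $i$ for which $\|\alpha_i n\|_{\R/\Z}$ lies in a prescribed fine subinterval.'' No coloring of $(\R/\Z)^d$ can work once $d$ is large. The paper proves this via Hales--Jewett: restrict to $\{0,1/3,2/3\}^d$, find a monochromatic combinatorial line, and note that it is a 3-term AP whose common difference $s$ has every coordinate $0$ or $1/3$ --- so $s$ is bounded away from $0$ --- and a 3-term AP in a torus with this difference extends to an infinite monochromatic AP with the same difference. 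Any torus 2-coloring, hierarchical or otherwise, is thus doomed. The paper's fix is to pass to the universal cover $\R^d$ and use a coloring that is $N_1$-periodic, i.e., a 2-coloring of the \emph{enlarged} torus $(\R/N_1\Z)^d$, chosen as a random chessboard on $N_0$-cells. The point is that the obstruction above lives at scale $1$ in each coordinate, whereas the chessboard varies at scale $N_0$ across a period $N_1 = kN_0$, and a divisibility argument (the period $t$ of near-return divides $N_1$) together with a union bound over $k^{O_{d,1/\delta}(1)}$ ``orbits'' rules out long monochromatic APs. Your proposal never leaves the unit torus and so cannot reach this.

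A secondary imprecision: you say ``Bohr recurrence corresponds morally to AP length $k=2$,'' but the gap between \Cref{thm:clean} and the conjecture is not about AP length --- 2-largeness already concerns arbitrarily long APs. The gap is that Bohr recurrence is recurrence for torus rotations only, while largeness is topological multiple recurrence for \emph{all} minimal systems; the paper's Section 4 sketches the intended route (nilsystems, skew products, distal systems, then a structure theorem), which is closer to the truth than an abelian-vs-higher-order framing.
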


While their paper was published in 1999, the Open Problem Garden claims \cite{open} that the conjecture was formulated in 1995.  Shortly after the original formulation of the large sets conjecture, Bergelson and Leibman \cite{bl} proved that if $S$ is the values of an integer polynomial, then $S$ is large.  In particular, this is a special case of the polynomial van der Waerden theorem of Bergelson and Leibman.

Perhaps it is easier to think about which sets are \emph{not} large.  We must determine if all such sets are $2$-large.  But what do non-large sets look like?  This is in fact the main subject of the paper of Brown, Graham, and Landman \cite{bgl}.  Until recently the only known examples of non-large sets were sets that were not sets of \enquote{nil-Bohr recurrence}, which are known to be non-large, and \cite{hsy} asked whether or not these are the only such sets.  Very recently, the author gave a new example of a non-large set \cite{alweiss}.

Let us start with a basic example of a non-large set.  If $S$ consists only of numbers that are far away from a multiple of some $\alpha$, then $S$ is not large, because we can color $x$ by $x$ modulo $\alpha$ up to a small error $\eps$ and if $x, x+d$ are the same color then their difference $d$ will be very close to a multiple of $\alpha$.  It is known (see \cite{sohail} and Theorem 7.4 of \cite{kra}) that such sets are not $2$-large).

\begin{definition}[One-dimensional Bohr recurrence] A set $S$ is a set of \emph{one-dimensional Bohr recurrence} if for any $\alpha$, there exists $s \in S$ so that $||s \alpha||_{\R/\Z}<\eps$.\end{definition}

In other words, it is known that if $S$ is $2$-large, then it is a set of one-dimensional Bohr recurrence.

\begin{definition}[Bohr recurrence] A set $S$ is a set of ($d$-dimensional) \emph{Bohr recurrence} if for any $\alpha_1, \cdots, \alpha_d, \eps$, there exists $s \in S$ so that $||s \alpha_i||_{\R/\Z}<\eps$ for $1 \le i \le d$.
\end{definition}

If a set $S$ is \emph{not} a set of Bohr recurrence, then it is readily seen not to be large.  One can color $n$ by taking the map $n \to ( \{\alpha_1 n \}, \cdots, \{ \alpha_d n \}) \in (\R/\Z)^d$, partitioning $(\R/\Z)^d$ into $\eps$-balls, and coloring $n$ by which $\eps$-ball its image lands in.  More generally, if $S$ is not a set of \enquote{nil-Bohr recurrence}, then it is not large.  We are ready to state our main result, which resolves a conjecture from \cite{sohail}.

\begin{theorem}[Main Theorem]\label{thm:main}
	If there exists $\alpha_1, \cdots, \alpha_d$ so that for all $s \in S$ there exists $1 \le i \le d$ so that $||s \alpha_i||_{\R/\Z}>\eps$ (i.e. $S$ is not a set of Bohr recurrence), then $S$ is not $2$-large.
\end{theorem}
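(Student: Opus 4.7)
The goal is to construct a $2$-coloring $c\colon \N \to \{0,1\}$ and a finite $k$ such that no length-$k$ arithmetic progression with common difference in $S$ is monochromatic. Since $S$ is contained in the set $T = \bigcup_{i=1}^d \{s \in \N : \|s\alpha_i\|_{\R/\Z} > \eps\}$, it suffices to show $T$ is not $2$-large.

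The natural approach is to pass to the torus: define $\pi\colon \N \to (\R/\Z)^d$ by $\pi(n) = (n\alpha_1, \ldots, n\alpha_d)$. A progression $n, n+s, \ldots, n+(k-1)s$ pushes forward under $\pi$ to a progression on $(\R/\Z)^d$ with direction $v = \pi(s)$, and the condition $s \in T$ becomes $\max_i \|v_i\|_{\R/\Z} > \eps$. Thus it suffices to construct a $\{0,1\}$-coloring $f$ of $(\R/\Z)^d$ and a constant $k$ such that every length-$k$ progression with direction $v$ satisfying $\max_i \|v_i\|_{\R/\Z} > \eps$ is non-monochromatic under $f$; then $c := f \circ \pi$ works. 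For $d=1$, the half--half coloring $f(q) = \lfloor 2q \rfloor$ and $k \geq \lceil 1/(2\eps) \rceil + 1$ suffice, since such a progression on $\R/\Z$ must cross the boundary of $[0,1/2)$ and $[1/2,1)$; this recovers the one-dimensional result of \cite{sohail,kra}.

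The challenge for $d \geq 2$ is to combine the $d$ one-dimensional colorings into a single $2$-coloring. The naive XOR $f(q) = \sum_i \lfloor 2q_i \rfloor \pmod{2}$ fails on correlated directions: most trivially when $\alpha_1 = \alpha_2$ the two coordinates of $\pi(n)$ coincide and the composite coloring becomes constant. I would instead pursue a multi-scale block construction directly on $\N$: partition $\N$ into dyadic blocks and, within each block, apply a one-dimensional half--half coloring corresponding to some $\alpha_i$, cycling $i$ through $\{1, \ldots, d\}$ across scales (with a further subdivision within each scale so every index appears at every scale). For a given $s \in T$, let $j$ be an index with $\|s\alpha_j\|_{\R/\Z} > \eps$; then locate a block of length comparable to $k_0(\eps) \cdot s$ and index $j$, within which the progression has enough terms colored by the coloring tuned to $\alpha_j$ to be non-monochromatic by the one-dimensional result.

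I expect the main obstacle to be ensuring that \emph{every} progression of length $k$ with difference in $T$ finds a compatible block regardless of its starting position. Progressions whose span $(k-1)s$ is much smaller than their starting index could lie entirely inside a single block assigned the ``wrong'' $\alpha_i$, so naive cycling is insufficient. A hierarchical recursive subdivision, in which each block contains nested sub-blocks cycling through all $d$ indices, seems necessary to cover every $(n, s)$ combination. An alternative that side-steps this bookkeeping would be to construct $f$ directly on $(\R/\Z)^d$ as the sign of a trigonometric polynomial whose frequencies are tuned so that, for every direction $v$ outside $[-\eps,\eps]^d$, some dominant frequency contributes a non-cancelling oscillation along the progression; verifying that such a polynomial exists would be the technical heart of that approach.
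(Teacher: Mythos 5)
Your central reduction is stated as: it suffices to find a $2$-coloring $f$ of $(\R/\Z)^d$ and a $k$ so that \emph{every} length-$k$ arithmetic progression on the torus whose direction $v$ satisfies $\max_i \|v_i\|_{\R/\Z} > \eps$ is non-monochromatic under $f$. Taken literally this is false once $d$ is large and $\eps < 1/3$: the paper records, as a warm-up, that a Hales--Jewett argument on the $3^d$ points $\{0,1/3,2/3\}^d$ produces, in \emph{any} $2$-coloring of $(\R/\Z)^d$, an infinitely long monochromatic progression whose common difference has all coordinates in $\{0,1/3\}$ and is nonzero, hence has $\max_i \|v_i\|_{\R/\Z} = 1/3 > \eps$. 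So the torus-level statement you propose to prove is not available, and this is precisely why the paper does \emph{not} color $(\R/\Z)^d$. Instead it lifts to the universal cover, mapping $n \mapsto (\alpha_1 n, \ldots, \alpha_d n) \in \R^d$ and coloring $\R^d$ periodically modulo a large $N_1$, i.e., coloring the much larger torus $(\R/N_1\Z)^d$ where there is room to evade the Hales--Jewett obstruction. That lift is the idea your proposal is missing; the XOR coloring, the block construction, and the trigonometric-polynomial sign all live on $(\R/\Z)^d$ or on $\N$ and never make this jump.

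Even setting the torus issue aside, your multi-scale block construction on $\N$ is unfinished in exactly the place you flag: you need every pair $(n,s)$ with $s \in T$ to have a long run of consecutive terms inside a block of the right scale \emph{and} the right index $j$, and you do not exhibit a scheduling of scales and indices that achieves this, nor an explicit bound on $k$. The paper's mechanism is genuinely different and avoids this bookkeeping. It colors the $N_1$-torus by a \emph{random} chessboard of $(N_1/N_0)^d = k^d$ cells; a pigeonhole argument produces a return time $t \le (CN_1/N_0)^d$ with $F(x+ty)$ close to $F(x)$; a divisibility argument (choosing $N_1$ divisible by every integer up to $Ck$, hence by $t$) shows $F(ty)$ is still at least $\delta$ from $0$ in the $N_1$-torus; hence the subprogression of common difference $ty$ traces a long ``line'' across many cells; and a union bound over only $k^{O_{d,1/\delta}(1)}$ line types (``orbits''), each monochromatic with probability $2^{-\Omega(k)}$, shows the random coloring works with high probability. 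If you want to push a block-on-$\N$ construction through, you will most likely need analogues of the return-time and divisibility steps; without them the argument is a plan rather than a proof.
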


We also state the contrapositive, because it is a bit cleaner.

\begin{theorem}[Main Theorem, contrapositive]\label{thm:clean}
	If $S$ is $2$-large, then it is a set of Bohr recurrence.
\end{theorem}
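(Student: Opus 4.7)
My goal is to exhibit, given $\alpha_1,\ldots,\alpha_d$ and $\delta>0$, a single $2$-coloring $c\colon\N\to\{0,1\}$ together with a uniform bound $K=K(d,\delta)$ such that every monochromatic arithmetic progression of length $K$ in $c$ has common difference $s$ with $\max_i\|s\alpha_i\|_{\R/\Z}\le\delta$; no such AP then has difference in $S$, and $S$ fails to be $2$-large. The natural reduction is to pass to the torus via the orbit map $\phi(n):=(n\alpha_1,\ldots,n\alpha_d)\in(\R/\Z)^d$: any $2$-coloring $\chi$ of $(\R/\Z)^d$ pulls back to $c:=\chi\circ\phi$, and a monochromatic AP in $c$ with common difference $s$ corresponds to a monochromatic orbit of $\chi$ under translation by $w_s:=(s\alpha_1,\ldots,s\alpha_d)$, which satisfies $\max_i\|w_{s,i}\|>\delta$ exactly when $s\in S$. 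It therefore suffices to construct $\chi\colon(\R/\Z)^d\to\{0,1\}$ and $K$ such that, for every starting point $v$ and every $w$ with $\max_i\|w_i\|>\delta$, the sequence $\chi(v),\chi(v+w),\ldots,\chi(v+(K-1)w)$ takes both values.

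The naive candidates all break down. The half-interval coloring $\chi(x)=\lfloor 2x_1\rfloor\pmod 2$ only controls $s$ with $\|s\alpha_1\|>\delta$, and more generally any single linear projection reduces to a $1$-dimensional problem whose effective frequency the adversary can make arbitrarily small. The Bohr-box coloring $\chi(x)=\mathbf{1}_{\max_i\|x_i\|<1/4}$ does bound the ``inside'' colour via the usual one-dimensional argument in whichever coordinate satisfies $\|w_i\|>\delta$, but the complementary colour is uncontrolled, because the coordinate witnessing $\|x_i\|\ge 1/4$ is free to change from step to step. What one needs is a coloring in which \emph{both} colour classes behave like the inside of a Bohr box.

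My plan is to construct $\chi$ inductively on $d$. Suppose we already have a coloring $\chi_{d-1}$ on $(\R/\Z)^{d-1}$ handling all $s$ with $\max_{i<d}\|s\alpha_i\|>\delta$. The remaining ``hard'' elements of $S$ satisfy $\|s\alpha_d\|>\delta$ but $\|s\alpha_i\|\le\delta$ for all $i<d$. For such $s$ the first $d-1$ coordinates of the torus orbit drift by at most $K\delta$ in $K$ steps (by the triangle inequality on $\R/\Z$), so, provided $\chi_{d-1}$ is sufficiently regular, it is essentially constant along the AP. One can then overlay a perturbation depending on the $d$-th coordinate that flips under shifts of norm $>\delta$ in that coordinate, producing a single $\{0,1\}$-valued function. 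The central obstacle is that this combination must preserve bounds for \emph{both} colour classes simultaneously: naive Boolean combinations such as XOR of two $1$-dimensional colorings can synchronize their sign flips along certain orbits --- for instance when two $\alpha_i$'s coincide or are nearly resonant, so that the two coordinates cross their respective boundaries in lockstep, leaving the combination constant. Quantifying the slow drift precisely, and engineering a perturbation that defeats all such synchronizations uniformly over $s\in S$, is where the real work lies.
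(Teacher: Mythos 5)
The core reduction in your proposal---constructing $\chi\colon(\R/\Z)^d\to\{0,1\}$ together with a uniform bound $K$ so that every length-$K$ torus AP with step $w$ satisfying $\max_i\|w_i\|>\delta$ is bichromatic---is not merely difficult; the paper proves it is impossible. In the proof-outline section the author exhibits, for $d$ sufficiently large, an infinitely long monochromatic AP in any two-coloring of $(\R/\Z)^d$ whose common difference $w$ has every coordinate equal to $0$ or $1/3$: one restricts the coloring to the lattice $\{0,1/3,2/3\}^d$, applies Hales--Jewett to obtain a monochromatic combinatorial line, and observes that this line is a $3$-periodic (hence infinite) AP in the torus. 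Since $w\neq 0$, one has $\max_i\|w_i\|=1/3>\delta$ whenever $\delta<1/3$. Thus no $\chi$ of the kind you seek exists, and neither the Bohr-box coloring, nor an XOR of slabs, nor any inductive or perturbative variant of these can possibly succeed. The ``synchronization'' problem you identify as the central obstacle is a symptom of this deeper obstruction rather than an engineering difficulty that sufficient cleverness might overcome.

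The idea your proposal is missing is that one must lift to the universal cover before coloring. The paper colors $\R^d$ periodically with a very large period $N_1$, equivalently it colors the torus $(\R/N_1\Z)^d$ by a random chessboard of $N_0$-cells, and pulls back along $F\colon\Z\to(\R/N_1\Z)^d$, $x\mapsto(\alpha_1 x,\ldots,\alpha_d x)\bmod N_1$. This map retains ``unwound'' information that the map to $(\R/\Z)^d$ throws away: over moderately many steps the orbit traces an honest Euclidean line in $\R^d$ before wrapping. The Hales--Jewett obstruction does not arise, because one need not defeat every step vector of max-norm $>\delta$, only the one-parameter family of steps actually realized through the lift, and that family is constrained geometrically. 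The argument then proceeds by a pigeonhole step (find a moderate $t$ with $F(x+ty)$ near $F(x)$ in the $N_1$-torus), a divisibility trick (using $t\mid N_1$ to show $F(ty)$ still has some coordinate at least $\delta$ from $0$ in the $N_1$-torus, not merely mod $1$), and a union bound: a long AP traces a line through $\Omega(k)$ cells, there are only $k^{O_{d,1/\delta}(1)}$ essentially distinct such lines, and each is monochromatic under the random chessboard coloring with probability only $2^{-\Omega(k)}$.
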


This result generalizes many of the examples from \cite{bgl}.  In particular, \cite{bgl} presents many examples of non-large sets which are in fact not Bohr recurrent, such as lacunary sets.

While we will not formally define \enquote{nil-Bohr recurrence} in the paper.  It is the higher-degree generalization of Bohr recurrence, so for instance if $s^2$ is far from a multiple of $\alpha$ for $s \in S$ then the identity $(x+2s)^2-2(x+s)^2+x^2=2s^2$ gives an obstruction to largeness.  One can replace $s^2$ with any higher degree polynomial or \enquote{generalized polynomial} involving floor functions.  See \cite{alweiss} for details.

In the next two sections, we only deal with $2$-largeness and not largeness, so we use the word \enquote{large} in the usual way.

\section{Coloring and Proof Outline}

Before describing the proof of~\Cref{thm:main}, we give a remark about the technique from \cite{sohail} and \cite{kra} and why it will not generalize, and we explain what we need to do instead.  The proof of \cite{sohail} and \cite{kra} proceeds by considering a map $f$ from $\Z$ to $\R/\Z$ taking $n \to \{n \alpha\}$, and coloring $\R/\Z$ in two colors.  In particular, if the interval $[0,1/2)$ is red and the interval $[1/2,1)$ blue then the length of a monochromatic arithmetic progression whose common difference is bounded away from $0$ in $\R/\Z$ will be bounded.  This does not generalize.

\begin{theorem}
	Let $d$ be a sufficiently large positive integer.  Then a two-coloring of $(\R/\Z)^d$ will contain an infinitely long arithmetic progressions with nonzero common difference $s$, so that all coordinates of $s \in (\R/\Z)^d$ are $0$ or $1/3$.
	\end{theorem}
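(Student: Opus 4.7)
The crucial observation is that any $s \in \{0, 1/3\}^d \subset (\R/\Z)^d$ satisfies $3s = 0$, so the ``infinite'' arithmetic progression $y, y+s, y+2s, y+3s, \dots$ simply cycles through the three points $y, y+s, y+2s$. It is therefore monochromatic as soon as those three points are. Thus the task reduces to finding, in any $2$-coloring of $(\R/\Z)^d$, a monochromatic $3$-term progression $y, y+s, y+2s$ with $s \in \{0, 1/3\}^d \setminus \{0\}$.

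The plan is to reduce this to the Hales--Jewett theorem. I would restrict the given $2$-coloring to the finite subset $H := \{0, 1/3, 2/3\}^d \subset (\R/\Z)^d$, which has $3^d$ elements and can be canonically identified with the combinatorial cube $[3]^d$ via the map $(i_1, \dots, i_d) \mapsto (i_1/3, \dots, i_d/3)$. Under this identification, a combinatorial line in $[3]^d$ is exactly a triple of the form $\{y, y+s, y+2s\}$ where $s \in \{0, 1/3\}^d$ is nonzero (with $1/3$ on the active coordinates and $0$ on the frozen ones) and $y$ vanishes on the active coordinates. In particular, every combinatorial line is a $3$-AP of precisely the shape demanded by the theorem.

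Applying Hales--Jewett with alphabet size $3$ and $2$ colors, one obtains a threshold $d_0$ such that for every $d \geq d_0$, every $2$-coloring of $[3]^d$, hence of $H$, contains a monochromatic combinatorial line. Pulling this line back to $(\R/\Z)^d$ and extending to an infinite AP via the identity $3s = 0$ completes the argument.

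I do not see a genuine obstacle: the entire content is the observation that length-$3$ progressions in $(\R/\Z)^d$ with step in $\{0,1/3\}^d$ are exactly the Hales--Jewett lines inside the cube $\{0,1/3,2/3\}^d$, after which one simply quotes Hales--Jewett. The only fussy point is a sign/direction convention --- a Hales--Jewett line runs through its variable letter in the order $0, 1, 2$, which in the $\R/\Z$ picture produces a common difference with coordinates in $\{0, 1/3\}$ rather than $\{0, 2/3\}$, matching the statement exactly as written.
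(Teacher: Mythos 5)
Your proposal is correct and is essentially identical to the paper's one-line proof: restrict to $\{0,1/3,2/3\}^d$, invoke Hales--Jewett to get a monochromatic combinatorial line, identify it with a $3$-AP, and note that $3s=0$ makes the progression infinite. You have simply spelled out the details that the paper leaves implicit.
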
  

\begin{proof}
	Consider only the points $\{0, 1/3, 2/3\}^d$.  By Hales-Jewett we can find a combinatorial line, which corresponds to a three-term and thus an infinitely long arithmetic progression.
\end{proof}

Instead we need to take the universal cover of the torus $(\R/\Z)^d$ and consider the map from $f: \Z \to \R^d$ given by $x \to (\alpha_1 x, \cdots, \alpha_d x)$.
Then we color $\mathbb{R}^d$ in the following way.  Pick integers $N_1 \gg N_0$ (by this we mean that $N_1$ is a sufficiently large function of $N_0$) chosen with foresight so that $k := \frac{N_1}{N_0}$ and $N_0$ are divisible by a sufficiently large factorial (depending on $\delta$). 
We will have the coloring be periodic mod $N_1$ in all directions. 
So then it suffices to consider a coloring of $(\mathbb{R}/N_1 \Z)^d$, or $[0,N_1)^d$.  This object is naturally decomposed into $k^d$ translates of $[0,N_0)^d$, called cells. 
We will color each cell red or blue randomly.  We refer to this object, the $k^d$ cells all colored randomly, as the ``chessboard".
We can identify all of $\mathbb{R}^d$ and thus all of $\Z$ with this chessboard, through pulling back the maps $\mathbb{Z} \to \mathbb{R}^d \to (\mathbb{R}/N_1\mathbb{Z})^d$.

Then, if we take $N_2 \gg N_1$ (of course it will be much bigger than any function of $\delta$), any arithmetic progression with a common difference $x$ of length $N_2$ so that $||\alpha_i x||>\delta$ will meet both red and blue squares.  
The idea is that it will trace out a line in the ``chessboard".  There are only $k^{O_d(1)}$ many such ``lines" and each has a probability of $2^{-\Omega(k)}$ of being monochromatic. 
So if $k$ is chosen large enough, a union bound gives us what we want by a simple probabilistic argument.  In some sense, this is a combinatorial phrasing of the fact that a torus rotation has zero entropy.

To show this, we must use that the arithmetic progression will return near its original position in the chessboard within $s=O(k^d)$ steps. 
If it does not go too close to its original position in $s$ steps, then going another $s$ steps, and another $s$ steps and so on will cause it to trace out a ``line" in far fewer than $N_2$ steps. 
If it is very close to the original position in $s$ steps, then we finish by a divisibility argument. 
In particular, we will use the fact that $s \mid N_1$. 
The step size in the chessboard must be very close to a multiple of $N_1/s$ chessboard cells in each direction which contradicts that $||\alpha_i x||>\delta$ for some $i$. 
The full proof is not much longer.

\section{The Proof}

Let $\alpha_1, \cdots, \alpha_d \in \mathbb{R}$, and let $S$ be the set of $x$ so that $||\alpha_i x||>\delta$ for some $i$ and some fixed $\delta>0$.
We will prove $S$ is not $2$-large by exhibiting a $2$-coloring.

Define some sufficiently large integers $N_0 \gg k \gg C \gg \max(d, 1/\delta)$, so that $N_1=kN_0$ is divisible by all integers up to $C N_1/N_0=Ck$.  We can ensure this by for instance setting $N_0$ to be a sufficiently large factorial function.  We will pick $N_2$ which is a sufficiently large as a function of $N_1$ and then we will show that there is a coloring so that every $N_2$-AP with common differences in $S$ receives both colors. 

Consider the map $f: \Z \to \mathbb{R}^d$ sending $x \to (\alpha_1x, \cdots, \alpha_d x)$ and the usual quotient map $g: \mathbb{R}^d \to (\mathbb{R}/N_1 \Z)^d$.  Composing these two maps, we get a map $F: \Z \to (\mathbb{R}/N_1 \Z)^d$. 
We will color the torus $(\mathbb{R}/N_1 \Z)^d$ red and blue, and by taking the pre-image under $F$ this gives a coloring of $\Z$. In order to color this torus, which has fundamental domain $[0,N_1)^d$ in $(\mathbb{R}/N_1 \Z)^d$, we decompose into $(N_1/N_0)^d$ copies of $[0,N_0)^d$ in the usual way.  
Each copy will be colored red or blue, uniformly and independently.
We claim that with high probability this coloring works, and any arithmetic progression of length $N_2$ with common difference in $S$ will have both red and blue elements.  
We call the object $(\mathbb{R}/N_1\Z)^d$ the ``$N_1$-torus" and we say it is decomposed into $(N_1/N_0)^d$ ``$N_0$-cells". 
Furthermore, we split each $N_0$-cell into $C^d$ ``mini-cells" in the obvious way.

Say we have an arithmetic progression in $\Z$ of the form $x, x+y, \cdots, x+N_2 y$ with $y \in S$.  We take its image under $F$ in the torus $(\mathbb{R}/N_1\Z)^d$. 
By pigeonhole, at least two of $F(x), F(x+y), \cdots, F(x+(CN_1/N_0)^d)$ lie in the same mini-cell, say $F(x+ay)$ and $F(x+by)$.
This means for some $t=|a-b| \le (CN_1/N_0)^d \ll N_2$, we have that $F(x+ty)$ and $F(x)$ are close in the $N_1$-torus, differing by at most $N_0/C$ in each coordinate.

Since we chose $t \mid N_2$, we may consider the sub-progression $x, x+ty, \cdots, x+N_2y$ of length $N_2/t + 1 \gg N_1$.
We first show that $F(ty)$ has some coordinate bounded away from $0$ in the $N_1$-torus, via a divisibility argument. 
Let $y' \in [0,N_1)^d$ be such that $F(y) \equiv y'$ in $(\mathbb{R}/N_1\Z)^d$. 
Then we have $ty' \in [0, tN_1)^d$, and by assumption $y'$ is at least $\delta$ away from an integer point in the $L^{\infty}$ distance. Say $ty'$ has distance $\eta$ from some point $(b_1N_1, \cdots, b_dN_1)$ where $b_i$ are integers, so that $F(ty)$ is $\eta$-close to $0$ in $(\mathbb{R}/N_1\Z)^d$ in $L^{\infty}$ distance. 
Then $y'$ has distance at most $\eta/t \le \eta$ from the integer point $(b_1\frac{N_1}{t}, \cdots, b_d\frac{N_1}{t})$, and so $\eta \ge \delta$.

Now, we have that $F(ty)$ has some coordinate at least $\delta$-far from $0$ in the $N_1$-torus, say the $i$th coordinate, and all its coordinates are at most $(N_1/CN_0)^d$ in magnitude. Since $N_2 \gg N_1$ is sufficiently large, this means that $F(x), F(x+ty), \cdots F(x+(N_2/N_1)y)$ will all lie on a line in the $N_1$-torus.  

Every line will meet $\Omega_{d, 1/\delta}(k)$ such cells in a segment that is an $\Omega_d(1)$ proportion of the cell's side length.  One can see this by for instance projecting onto the $i$th coordinate.  In fact, we can find $k^{O_{d, 1/\delta}(1)}$ choices of $\Omega_{d, 1/\delta}(k)$ cells (which we call an \emph{orbit}) so that every line meets one of them in an $\Omega_d(1)$ proportion of the cell's side length, for instance by controlling the angle of the line up to a $1/k^{O_{d,1/\delta}(1)}$ error and the intercept up to a $N_1/k^{O_{d,1/\delta}(1)}$ error.  This means that the arithmetic progression will meet all the cells in the orbit.

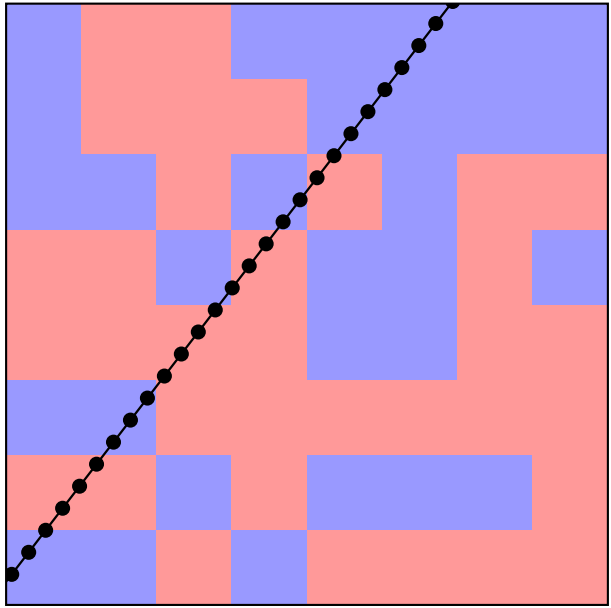
\begin{figure}[ht]
  \centering
  \begin{tikzpicture}[x=1cm,y=1cm] 
    \pgfmathsetseed{20251209}              

    \foreach \x in {0,...,7} {
      \foreach \y in {0,...,7} {
        \pgfmathrandominteger{\coin}{0}{1} 
        \ifnum\coin=0
          \fill[red!40]  (\x,\y) rectangle ++(1,1);
        \else
          \fill[blue!40] (\x,\y) rectangle ++(1,1);
        \fi
      }
    }

    \draw[thick] (0,0) rectangle (8,8);

    \begin{scope}
      \clip (0,0) rectangle (8,8);

      \pgfmathsetmacro{\dx}{1.0}
      \pgfmathsetmacro{\dy}{1.3}
      \pgfmathsetmacro{\norm}{sqrt(\dx*\dx + \dy*\dy)}
      \pgfmathsetmacro{\stepsize}{0.37 / \norm}

      \pgfmathsetmacro{\xstart}{0.3}
      \pgfmathsetmacro{\ystart}{0.7}

      \pgfmathsetmacro{\T}{40}
      \draw[line width=0.03cm]
        ({\xstart - \T*\dx},{\ystart - \T*\dy}) --
        ({\xstart + \T*\dx},{\ystart + \T*\dy});

      \foreach \k in {-40,...,40} {
        \pgfmathsetmacro{\t}{\k * \stepsize}
        \pgfmathsetmacro{\xx}{\xstart + \t*\dx}
        \pgfmathsetmacro{\yy}{\ystart + \t*\dy}
        \fill[black] (\xx,\yy) circle[radius=0.10cm];
      }
    \end{scope}
  \end{tikzpicture}
  \caption{An example of an orbit.
    On the $N_1$-torus, this line wraps around the board.}
\end{figure}

Hence, there are only $(N_1/N_0)^{O_{d,1/\delta}(1)}=k^{O_{d,1/\delta}(1)}$ possible orbits, and in a random coloring each one has only an exponentially small chance $2^{-\Omega(k)}$ of being monochromatic.  So if $k$ is chosen large enough, then we will not have length $N_2$ arithmetic progressions that are monochromatic.

\section{Further Directions}

In general, a set is not \enquote{large} in the sense of \cite{bgl} if it is not a set of (topological) \emph{multiple recurrence} for some dynamical system.  Here we have handled torus rotations, which are a special kind of dynamical system.  One could ask also about increasingly more general classes of dynamical systems such as nilsystems, skew products, and general distal systems.  A potential approach to settling the $2$-large versus large conjecture in the positive is to settle it for some classes of dynamical systems and then to use an appropriate structure theorem to extend the result to all topological dynamical systems.  A similar approach to Katznelson's Question is outlined in \cite{gkr}.

We know how to settle the $2$-large versus large conjecture for some nilrotations.  If $\alpha$ is irrational, then the set of $n$ for which the fractional part $\{ n^2 \alpha \}$ is bounded away from $0$ and $1$, so the set of $n$ so that $||\{n^2 \alpha\}||_{\R/\Z}>\delta$ for some $\delta>0$ can be shown to not be $2$-large by a similar argument, using standard facts about the equidistribution of $n^2 \alpha$ (see \cite{ET48}, \cite{Weyl16}).  We strongly suspect that the same holds for all nilsystems.  The construction of \cite{alweiss} can be rephrased as coming from a skew product on $(\R/\Z)^5$.  We generally suspect that examples like the one in \cite{alweiss} are not $2$-large as well.  However, we do not have strong opinions on what happens for all dynamical systems, i.e. on whether the $2$-large versus large conjecture of \cite{bgl} is true.

\section{Acknowledgments}

The author thanks Sohail Farhangi for helpful comments, and thanks GPT-5 for LaTeX help.

\end{document}